\newtheorem{theorem}{Theorem}
\newtheorem{lemma}{Lemma}
\newtheorem{definition}{Definition}
\begin{document}
{\selectlanguage{english}
\binoppenalty = 10000 %
\relpenalty   = 10000 %

\pagestyle{headings} \makeatletter
\renewcommand{\@evenhead}{\raisebox{0pt}[\headheight][0pt]{\vbox{\hbox to\textwidth{\thepage\hfill \strut {\small Grigory. K. Olkhovikov}}\hrule}}}
\renewcommand{\@oddhead}{\raisebox{0pt}[\headheight][0pt]{\vbox{\hbox to\textwidth{{Characterization of predicate intuitionistic formulas}\hfill \strut\thepage}\hrule}}}
\makeatother

\title{Intuitionistic predicate logic of constant domains does not have Beth property}
\author{Grigory K. Olkhovikov\\ Department of Ontology and Cognition Theory\\ Ural Federal University\\
Fulbright Visiting Scholar at the Philosophy Dept,\\Stanford
University \\Bldg 90, Stanford, CA, USA}
\date{}
\maketitle
\begin{quote}
{\bf Abstract.} Drawing on the previous work \cite{MOU2012} on
interpolation failure, we show that Beth's definability theorem
does not hold for intuitionistic predicate logic of constant
domains without identity.
\end{quote}

It is known that intuitionistic predicate logic of constant
domains without identity does not have Craig interpolation
property. Given that this property is normally used to derive,
among other things, Beth's definability theorem, the failure of
interpolation suggests a hypothesis that for the mentioned logic
Beth's definability property fails, too. In the paper expounding
the non-interpolation result its authors mention this hypothesis:
``It is not known at this moment whether \textbf{CD} has the Beth
property, although this does not look plausible'' \cite[p.
3]{MOU2012}.

In the present paper we show that this hypothesis is in fact true.
The counterexample we give for Beth's definability property of
\textbf{CD} is in close and obvious connection with the
counterexample for interpolation given in \cite{MOU2012}, and we
use similar methods to prove that it is in fact a counterexample.
However, the models used in the counterexample are somewhat
different and more complicated than the ones used to disprove
interpolation. This necessitates introduction of some new
technical notions that were not required for the interpolation
failure proof and discussion of some of their properties.

\begin{definition} A theory $T$ in language $L$ \emph{implicitly defines}
$P \in L$ iff for any model $\mathcal{M}$ of $T$ there is no model
$\mathcal{N}$ of $T$ that would differ from $\mathcal{M}$ only in
the extension of $P$.  A theory $T$ in language $L$
\emph{explicitly defines} $P \in L$ iff there is a formula
$\Theta(\vec{x}) \in L \smallsetminus \{ P \}$, such that $T
\models \forall\vec{x}(\Theta(\vec{x}) \leftrightarrow
P(\vec{x}))$. A logic has \emph{Beth's definability property} iff
for any theory $T$ in any language $L$ of this logic and any $P
\in L$, if $T$ defines $P$ implicitly, then $T$ defines $P$
explicitly.
\end{definition}

 We assume the results proved and the notions defined in
 \cite{MOU2012}
and notational conventions used there.

Consider function
\begin{align*}
\gamma(n) =
\begin{cases}
3n + 1, \text{ if } n \in 3\mathbb{N} \cup (3\mathbb{N} + 2);\\
n\text{ otherwise}.
\end{cases}
\end{align*}

and relation $R(x, y) \Leftrightarrow (x = \gamma(y) \vee y =
\gamma(x))$. If $X \subseteq \mathbb{N}$ then we call it
\emph{closed} iff it is closed with respect to $R$, and we take
the \emph{closure of }$X$, $Cl(X)$ to be the least $Y$ such that
$X \subseteq Y \subseteq \mathbb{N}$ and $Y$ is closed. For  $X
\subseteq \mathbb{N}$ we denote $Cl(X) \smallsetminus X$ by
$Cl^-(X)$.

It is easy to see that our closures have a very special structure:
every closed set is representable as a sum of singletons and pairs
which are closures of its points. So the following notion of
\emph{the companion of a natural} $n$ (which we denote by
$\tilde{n}$) turns out to be useful:
\begin{align*}
\tilde{x} =
\begin{cases}
y,\text{ if }x \neq y \in Cl(\{ x \});\\
x\text{ otherwise}.
\end{cases}
\end{align*}
We denote the set of natural numbers that are their own companions
by $\mathbb{N}_0$. Thus $\mathbb{N}_0 = \{ n \mid \gamma(n) = n
\}$. The following lemma states some obvious properties of the
notions defined above that we will need in what follows:
\begin{lemma}
Let $n \in\mathbb{N}$, $X \subseteq \mathbb{N}$. Then the
following statements are true:
\begin{align}
&\mathbb{N}_0 = \mathbb{N} \smallsetminus Cl(3 \mathbb{N} \cup (3
\mathbb{N} + 2));\label{En0}\\
&\mathbb{N}_0\text{ is infinite;}\label{En0inf}\\
&Cl(X)\text{ is finite } \Leftrightarrow Cl^-(X)\text{ is finite } \Leftrightarrow X\text{ is finite;}\label{Eclofin}\\
&n = \tilde{\tilde{n}};\label{Etilde}\\
&n \in 3\mathbb{N} \cup 3\mathbb{N} + 2 \Leftrightarrow \tilde{n}
\in
Cl^-(3\mathbb{N} \cup 3\mathbb{N} + 2);\label{Eiff1}\\
&\tilde{n} \in 3\mathbb{N} \cup 3\mathbb{N} + 2 \Leftrightarrow n
\in
Cl^-(3\mathbb{N} \cup 3\mathbb{N} + 2);\label{Eiff2}\\
&Cl^-(X) \subseteq 3\mathbb{N} + 1\label{Eclo-}\\
&X = Cl(X) \Leftrightarrow \mathbb{N} \smallsetminus X = Cl(
\mathbb{N} \smallsetminus X)\label{Ecloneg}
\end{align}
\end{lemma}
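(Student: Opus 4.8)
The plan is to reduce the entire lemma to one structural fact about $\gamma$, proved first, after which each of the eight clauses is essentially bookkeeping. The fact I would establish is that every value of $\gamma$ lies in $3\mathbb{N}+1$ and that $\gamma$ fixes $3\mathbb{N}+1$ pointwise, so $\gamma\circ\gamma=\gamma$; moreover $\gamma$ is injective on $3\mathbb{N}\cup(3\mathbb{N}+2)$, so each point has at most one $\gamma$-preimage. Consequently the $R$-neighbours of any $x$ are $\gamma(x)$ together with at most one preimage, and iterating $R$ stabilises after a single step. This is exactly the ``special structure'' quoted just before the lemma: $Cl(\{x\})=\{x,\tilde{x}\}$ with the companion $\tilde{x}$ well defined, and $Cl(X)=\bigcup_{x\in X}\{x,\tilde{x}\}$. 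Symmetry of ``$x$ and $y$ lie in the same two-element closure'' gives \eqref{Etilde} at once, and here I would also record the explicit residue description: a number $n\equiv 0$ or $2\pmod 3$ is paired with its image $3n+1$, while a number $n\equiv 1\pmod 3$ is paired with the preimage $(n-1)/3$ when that preimage exists and is its own companion otherwise. Every remaining clause is read off from this description.

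From $Cl(X)=\bigcup_{x\in X}\{x,\tilde{x}\}$ I get $|Cl(X)|\le 2|X|$ and $|Cl^-(X)|\le |X|$, whereas $X\subseteq Cl(X)$ and $Cl^-(X)\subseteq Cl(X)$ give the opposite bounds; the three-way equivalence \eqref{Eclofin} is then immediate. For \eqref{En0} I would compute $Cl(3\mathbb{N}\cup(3\mathbb{N}+2))$ directly: closing the non-$1$-mod-$3$ numbers adjoins precisely their images $3n+1$, so the closure is all of $\mathbb{N}$ except the self-companions, and its complement is therefore $\mathbb{N}_0$. Clause \eqref{En0inf} follows by exhibiting infinitely many self-companions, namely the $n\equiv 1\pmod 3$ whose putative preimage $(n-1)/3$ is again $\equiv 1\pmod 3$.

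The two ``companion versus $Cl^-$'' statements are pure case analysis on the pairing. Each non-trivial pair contains exactly one member of $3\mathbb{N}\cup(3\mathbb{N}+2)$ and one member of $3\mathbb{N}+1$, and $Cl^-(3\mathbb{N}\cup(3\mathbb{N}+2))$ is precisely the set of those $1$-mod-$3$ partners; checking the residue cases yields \eqref{Eiff1}, and \eqref{Eiff2} then follows formally by applying \eqref{Eiff1} to $\tilde{n}$ and invoking \eqref{Etilde}. For \eqref{Eclo-} the point is the observation made at the outset: a number enters $Cl(X)$ only as a value of $\gamma$, and every value of $\gamma$ lies in $3\mathbb{N}+1$. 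Finally, \eqref{Ecloneg} is cleanest from the block picture: a set is closed iff it is a union of the singleton/pair closures, and the complement of a union of such blocks is again a union of blocks, hence closed; the converse is the same statement read in the other direction.

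The main effort is the initial structural step, verifying that $R$-components never exceed two elements, since everything downstream is routine once that is in hand; concretely this amounts to $\gamma(\mathbb{N})\subseteq 3\mathbb{N}+1$, injectivity of $\gamma$ off its fixed set, and the fact that a fixed point gains at most one extra neighbour. The one place that genuinely demands care is keeping the orientation of each pair straight across \eqref{Eclo-} and \eqref{Ecloneg}: \eqref{Eclo-} exploits that adjoined points are $\gamma$-images and hence $\equiv 1\pmod 3$, whereas \eqref{Ecloneg} uses the symmetric block decomposition, so I would make sure the residue computation recorded in the first step is applied consistently in both arguments.
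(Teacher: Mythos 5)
Your structural reduction---companion pairs of size at most two, $Cl(X)=\bigcup_{x\in X}\{x,\tilde{x}\}$, with the companion map an involution---is exactly the picture the paper gestures at before the lemma, and is in fact more thorough than the paper's own proof, which only sketches \eqref{En0inf} and \eqref{Ecloneg} and declares the remaining clauses self-evident. Your arguments for \eqref{En0}, \eqref{En0inf}, \eqref{Etilde}, \eqref{Eiff1}, \eqref{Eiff2} and \eqref{Ecloneg} are correct; indeed your description of $\mathbb{N}_0$ as $\{n\equiv 1 : (n-1)/3\equiv 1 \pmod{3}\}$ is sounder than the paper's sketch of \eqref{En0inf}, whose witness sequence starts at $n_0=1$ even though $\tilde{1}=0$, so $1$ is not its own companion.

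However, there are two genuine failures, at \eqref{Eclo-} and \eqref{Eclofin}, and they cannot be repaired in the stated generality because both clauses are actually false there. For \eqref{Eclo-} you argue that ``a number enters $Cl(X)$ only as a value of $\gamma$''. Under the symmetric closure you use everywhere else (the one the paper defines, and the one that \eqref{Etilde} and \eqref{Ecloneg} require), this is wrong: closing also adjoins $\gamma$-\emph{preimages}. Concretely, $\gamma(0)=1$, so $Cl(\{1\})=\{0,1\}$ and $Cl^{-}(\{1\})=\{0\}\not\subseteq 3\mathbb{N}+1$; your own block decomposition says the adjoined elements are companions, and the nontrivial companion of a $1$-mod-$3$ number is a preimage lying in $3\mathbb{N}\cup(3\mathbb{N}+2)$, contradicting your claim. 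For \eqref{Eclofin}, the cardinality bounds you list yield every implication among the three statements except the passage from finiteness of $Cl^{-}(X)$ back to finiteness of $X$ (equivalently of $Cl(X)$), which is precisely the direction that is not immediate---and it is false: any infinite closed set, e.g.\ $\mathbb{N}$ or $\mathbb{N}_0$ itself, has $Cl^{-}(X)=\emptyset$. What is true, and what every application in the paper actually needs (the uses are for $3\mathbb{N}$, $3\mathbb{N}+2$, $w_3$, $I\cap(3\mathbb{N}+2)$), is the restriction of both clauses to $X\subseteq 3\mathbb{N}\cup(3\mathbb{N}+2)$: there each companion is the image $3x+1\in 3\mathbb{N}+1$, these images are pairwise distinct and disjoint from $X$, so $Cl^{-}(X)$ consists exactly of them, giving $Cl^{-}(X)\subseteq 3\mathbb{N}+1$ and $|Cl^{-}(X)|=|X|$ at once. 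Your write-up should prove this restricted version and flag the overstatement in the lemma; as it stands, the phrases ``is then immediate'' and ``the point is'' conceal exactly the steps that fail.
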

\begin{proof}
Most of the statements are in fact self-evident given the above
definitions. As an example we sketch proofs of the following ones:

\eqref{En0inf} Let $n_0 = 1$, $n_{i + 1} = 3n_i + 1$. An easy
induction shows that $n_i \neq n_j$ for $i \neq j$ and that all
$n_i$ are in $\mathbb{N}_0$.

\eqref{Ecloneg} Let $X$ be a closed subset of $\mathbb{N}$ and let
$n \notin X$. If $\tilde{n} \in X$, then, by closure of $X$ and
\eqref{Etilde}, $n = \tilde{\tilde{n}} \in X$ which contradicts
the choice of $n$. Therefore, $\mathbb{N} \smallsetminus X$ is
closed.
\end{proof}

We consider the following quasi-partitions:

\[
\mathbf{v} = (\mathbf{v}_1, \mathbf{v}_2, \mathbf{v}_3) = (
\mathbb{N} \smallsetminus Cl(3 \mathbb{N} + 2), Cl^-(3 \mathbb{N}
+ 2),(3 \mathbb{N} + 2) )
\]
\[
\mathbf{u} = (\mathbf{u}_1, \mathbf{u}_2, \mathbf{u}_3) =
(\mathbf{v}_1, \emptyset, \mathbf{v}_2 \cup \mathbf{v}_3)
\]

It is clear that these are in fact quasi-partitions.

 Moreover, consider set
\[
U = \{(A,B,C) \mid  \mathbf{v} \trianglelefteq (A,B,C), A\text{ is
closed, }B\subseteq \mathbf{v}_2\}
\]

It follows from \eqref{Ecloneg} that $\mathbf{v}_1$ is closed.
Therefore, $U$ is non-empty, for example, $\mathbf{v} \in U$.

The following lemma states some obvious properties of the defined
quasi-partitions that we will need in what follows:
\begin{lemma}
Let $w = (w_1, w_2, w_3) \in W_2$.Then the following statements
are true:
\begin{align}
&\mathbf{u} \trianglelefteq \mathbf{v};\label{Etriangle}\\
&w_2 \subseteq \mathbf{v}_2;\label{Ev2}\\
&Cl(3\mathbb{N}) \cup \mathbb{N}_0 \subseteq \mathbf{v}_1 =  \mathbf{u}_1 \subseteq w_1\label{E3n}\\
&w \in U \Rightarrow w_3\text{ is an infinite subset of }3\mathbb{N} + 2;\label{Ew3}\\
&w \in U \Rightarrow (3\mathbb{N} + 1) \subseteq w_1 \cup
w_2;\label{Ew1w2}\\
&w \in U \Rightarrow w_2 = Cl^-(w_3);\label{Ew2w3}\\
&w \in U \Rightarrow w_2 \cap \mathbf{v}_2 \neq
\emptyset.\label{Ew2v2}
\end{align}
\end{lemma}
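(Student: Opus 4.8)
The plan is to prove the seven items in a dependency‑respecting order, leaning on the arithmetic facts \eqref{En0}–\eqref{Ecloneg} of the previous lemma together with the definitions of $\mathbf{v}$, $\mathbf{u}$, $U$ and the notions $\trianglelefteq$, $W_2$ imported from \cite{MOU2012}; throughout I use the order property that $\mathbf{p}\trianglelefteq\mathbf{q}$ forces $p_1\subseteq q_1$ and $q_3\subseteq p_3$. I would dispose of the structural items first. For \eqref{Etriangle} I simply check the two inclusions witnessing $\mathbf{u}\trianglelefteq\mathbf{v}$: $\mathbf{u}_1=\mathbf{v}_1\subseteq\mathbf{v}_1$ and $\mathbf{v}_3\subseteq\mathbf{v}_2\cup\mathbf{v}_3=\mathbf{u}_3$, both immediate from the definition of $\mathbf{u}$. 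Item \eqref{Ev2} I read off the definition of $W_2$. For \eqref{E3n}, $\mathbf{v}_1=\mathbf{u}_1$ holds by definition, $\mathbf{u}_1\subseteq w_1$ holds because every $w\in W_2$ lies $\trianglerighteq\mathbf{u}$ (so first components grow), and the inclusion $Cl(3\mathbb{N})\cup\mathbb{N}_0\subseteq\mathbf{v}_1=\mathbb{N}\smallsetminus Cl(3\mathbb{N}+2)$ splits into $\mathbb{N}_0\subseteq\mathbf{v}_1$, which is \eqref{En0} together with monotonicity of $Cl$, and $Cl(3\mathbb{N})\cap Cl(3\mathbb{N}+2)=\emptyset$, which I verify by computing both closures modulo $9$ (they occupy the residues $\{0,1,3,6\}$ and $\{2,5,7,8\}$ respectively).

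Next I would treat the combinatorial core, which governs the $U$‑items. Fix $w\in W_2\cap U$. From $\mathbf{v}\trianglelefteq w$ the third components satisfy $w_3\subseteq\mathbf{v}_3=3\mathbb{N}+2$, while \eqref{Ev2} and \eqref{Eclo-} give $w_2\subseteq\mathbf{v}_2\subseteq 3\mathbb{N}+1$; and since $w\in U$ makes $w_1$ closed, \eqref{Ecloneg} shows its complement $w_2\cup w_3$ is closed as well. This already yields \eqref{Ew1w2}, as $w_3\subseteq 3\mathbb{N}+2$ gives $3\mathbb{N}+1\subseteq\mathbb{N}\smallsetminus w_3=w_1\cup w_2$.

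For \eqref{Ew2w3} I would argue by double inclusion. Because $w_2\cup w_3$ is closed and contains $w_3$, it contains $Cl(w_3)$, whence $Cl^-(w_3)=Cl(w_3)\smallsetminus w_3\subseteq w_2$. Conversely, any $a\in w_2\subseteq\mathbf{v}_2$ has companion $\tilde{a}\in 3\mathbb{N}+2$ lying, by closedness of $w_2\cup w_3$ and \eqref{Etilde}, in $w_2\cup w_3$; it cannot be in $w_2\subseteq 3\mathbb{N}+1$, so $\tilde{a}\in w_3$, and therefore $a=\gamma(\tilde{a})\in Cl^-(w_3)$. Thus $w_2=Cl^-(w_3)$.

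The remaining items are the infiniteness clause of \eqref{Ew3} and \eqref{Ew2v2}, and the first of these is where the real work — and the dependence on \cite{MOU2012} — resides. The $U$‑conditions by themselves do not force $w_3$ to be infinite: one can build a closed $w_1$ whose complement inside $3\mathbb{N}+2$ is a single point (say $w_3=\{2\}$, $w_2=\{7\}$, $w_1=\mathbb{N}\smallsetminus\{2,7\}$) still satisfying $\mathbf{v}\trianglelefteq w$, $w_1$ closed and $w_2\subseteq\mathbf{v}_2$. Hence the infiniteness must be extracted from the definition of $W_2$ itself, and I expect locating and invoking the precise clause of that definition to be the main obstacle; once it is in hand, \eqref{Eclofin} transfers it freely among $w_3$, $Cl^-(w_3)$ and, via \eqref{Ew2w3}, $w_2$. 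Finally \eqref{Ew2v2} is then immediate: by \eqref{Ev2} we have $w_2\subseteq\mathbf{v}_2$, so $w_2\cap\mathbf{v}_2=w_2$, and $w_2=Cl^-(w_3)$ is non‑empty since $w_3$ is (indeed infinite). I would accordingly present \eqref{Ew2w3} before the infiniteness part of \eqref{Ew3}, and \eqref{Ew2v2} last, so that each item relies only on what precedes it.
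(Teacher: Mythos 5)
Your proposal is correct, and for the four items the paper actually writes out (\eqref{E3n}, \eqref{Ew1w2}, \eqref{Ew2w3}, \eqref{Ew2v2}) it runs parallel to the paper's own sketch; the paper dismisses the remaining items, including \eqref{Ew3}, as self-evident. Your local variants are sound: the mod-$9$ computation $Cl(3\mathbb{N})=3\mathbb{N}\cup(9\mathbb{N}+1)$, $Cl(3\mathbb{N}+2)=(3\mathbb{N}+2)\cup(9\mathbb{N}+7)$ replaces the paper's shorter appeal to \eqref{Eclo-} together with closedness of $\mathbf{v}_1$, and your derivation of $Cl^-(w_3)\subseteq w_2$ from closedness of $w_2\cup w_3$ (via \eqref{Ecloneg}) is if anything cleaner than the paper's elementwise companion-chasing, which argues both inclusions by cases on where $\tilde{x}$ can fall.

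The one step you left open --- the infiniteness clause of \eqref{Ew3} --- is resolved exactly as you predicted, and this is why the paper treats \eqref{Ew3} as self-evident rather than proving it. In \cite{MOU2012} a quasi-partition is, by definition, a triple of pairwise disjoint sets with union $\mathbb{N}$ whose third component is infinite (and whose second component is empty or infinite); the order $\trianglelefteq$ is defined on quasi-partitions, so the clause $\mathbf{v}\trianglelefteq(A,B,C)$ in the definition of $U$ already presupposes that $(A,B,C)$ is a quasi-partition. Your candidate triple $(\mathbb{N}\smallsetminus\{2,7\},\{7\},\{2\})$ is thereby excluded from $U$ twice over --- finite third component and finite non-empty second component --- so it is not a counterexample to \eqref{Ew3}, but it does correctly demonstrate that the explicitly displayed conditions on $U$ cannot yield infiniteness on their own, which was your diagnosis. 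The paper's own text corroborates this reading: the remark ``It is clear that these are in fact quasi-partitions,'' the case split ``$B$ is infinite, or $B=\emptyset$'' in the proof of Lemma \ref{Zanasimulation}, and the Subclaims 1.1 there, which establish that $K$ and $L$ are infinite precisely so that $(J,K,L)$ qualifies as a quasi-partition lying in $U$. With that definitional clause in hand, your ordering of the items and the remaining inferences (\eqref{Ew2w3} before the infiniteness transfer, \eqref{Ew2v2} last) go through as written.
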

\begin{proof}
Again, most of the statements are in fact self-evident given the
above definitions. As an example we sketch proofs of the following
ones:

\eqref{E3n}  We have $3\mathbb{N} \subseteq \mathbf{v}_1$, since
$Cl(3\mathbb{N} + 2) = (3\mathbb{N} + 2) \cup Cl^-(3\mathbb{N} +
2) \subseteq (3\mathbb{N} + 2) \cup (3\mathbb{N} + 1)$ by
\eqref{Eclo-}. Therefore, $Cl(3\mathbb{N}) \subseteq
Cl(\mathbf{v}_1) = \mathbf{v}_1$. We also have $\mathbb{N}_0
\subseteq \mathbf{v}_1$ by \eqref{En0}.

\eqref{Ew1w2} Since $w \in U$, by $\mathbf{v} \trianglelefteq w$
we have $w_3 \subseteq \mathbf{v}_3$, therefore $\mathbf{v}_1 \cup
\mathbf{v}_2 \subseteq w_1 \cup w_2$. And we clearly have
$(3\mathbb{N} + 1) \subseteq \mathbb{N} \smallsetminus
(3\mathbb{N} + 2) = \mathbf{v}_1 \cup \mathbf{v}_2$.

\eqref{Ew2w3} If $x \in w_3$, then by \eqref{Ew3} $x \in
3\mathbb{N} + 2$, therefore $\tilde{x} \in 3\mathbb{N} + 1$ whence
by \eqref{Ew1w2} $\tilde{x} \in w_1 \cup w_2$, so $\tilde{x}
\notin w_3$. If $\tilde{x} \in w_1$ then by closure of $w_1$ we
must have $x \in w_1$, a contradiction. Therefore,  $\tilde{x} \in
w_2$. In the other direction, if $x \in w_2 \subseteq
\mathbf{v}_2$ (by \eqref{Ev2}), then $x \in Cl^-(3\mathbb{N} +
2)$, therefore, $\tilde{x} \in (3\mathbb{N} + 2)$. So, $\tilde{x}
\notin w_2$, but then $\tilde{x} \in w_1 \cup w_3$. If $\tilde{x}
\in w_1$, then by closure $x \in w_1$, a contradiction. Therefore,
$\tilde{x} \in w_3$.

\eqref{Ew2v2} By \eqref{Ew3}, $w_3$ is non-empty and $w_3
\subseteq 3 \mathbb{N} + 2$. So, choose $k$ such that $n = 3k + 2
\in w_3$ and consider $\tilde{n} \in 3\mathbb{N} + 1$. By
\eqref{Ew2w3}, $\tilde{n} \in Cl^-(w_3) = w_2$, and given
\eqref{Ev2}, $\tilde{n} \in B \cap \mathbf{v}_2$.
\end{proof}

 ${\cal M}_1$ and
${\cal M}_2$ are of the form ${\cal M} = \langle W, \leq,
\mathbf{w}, D, \phi \rangle$ they are defined as follows.

\begin{enumerate}
\item The base points for ${\cal M}_1$ and ${\cal M}_2$ are
$\mathbf{v}$ and $\mathbf{u}$ respectively; \item The sets of
states for the models are as follows.
\begin{enumerate}
\item $W_1 = U$; \item $W_2 = U \cup \{\mathbf{u}\}$;
\end{enumerate}
\item The ordering on both ${\cal M}_1$ and ${\cal M}_2$ is
$\trianglelefteq$; \item $D_1 = D_2 = \mathbb{N}$; \item For $i =
1,2$, the values $\phi_i$ assigned to  the predicate symbols $P$,
 $Q$, $R$ and the propositional letter $s$ are defined by
\[ \phi_i(P) = \{ \langle v,a \rangle | a \in v_1 \cup v_2 \}, \]
\[ \phi_i(Q) = \{ \langle v,a \rangle | a \in v_1 \}, \]
\[ \phi_i(R) = \{ \langle v,a \rangle | \gamma(a) \in v_1 \}, \]
\[ \phi_i(s) = U. \]
\end{enumerate}

For $i = 1,2$ we denote extension of a predicate letter $\Pi \in
\{ P, Q, R \}$ at $w \in W_i$, that is  to say, the right
projection of the set $ \phi_i(\Pi) \cap (\{ w \}
\times\mathbb{N})$, by $\phi^w_i(\Pi)$.

Now, consider theory $T$:

\begin{align}
&\forall x(s \to \exists y(P(y) \wedge (Q(y) \to
R(x))))\label{E1}\\
&\neg\forall xR(x)\label{E2}\\
&\forall x(P(x) \to (Q(x) \vee s))\label{E3}
\end{align}

\begin{lemma}\label{Lsat}
Both ${\cal M}_1$ and ${\cal M}_2$ satisfy $T$.
\end{lemma}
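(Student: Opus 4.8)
The plan is to read ``$\mathcal{M}_i \models T$'' as the statement that the base point of $\mathcal{M}_i$ forces each of \eqref{E1}, \eqref{E2}, \eqref{E3}. Since every state of $\mathcal{M}_1$ is $\trianglerighteq\mathbf{v}$, and, using \eqref{Etriangle} and \eqref{E3n}, every state of $\mathcal{M}_2$ is $\trianglerighteq\mathbf{u}$, forcing a $\forall$- or a $\neg$-formula at the base point reduces to a condition that must hold uniformly at every state $w$ of the model. Before checking the axioms I would record that the valuations are genuinely persistent, so that the semantics is well defined: $\phi^w_i(Q)=w_1$ and $\phi^w_i(R)=\{a\mid\gamma(a)\in w_1\}$ grow because $w_1$ grows along $\trianglelefteq$, $\phi^w_i(P)=w_1\cup w_2=\mathbb{N}\smallsetminus w_3$ grows because $w_3$ shrinks, and $\phi_i(s)$ is upward closed in $W_i$ (it is all of $W_1$, and in $W_2$ it is $U$, from which only the $\trianglelefteq$-minimal point $\mathbf{u}$ is missing). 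In particular the states forcing $s$ are exactly the $w\in U$.

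For \eqref{E1} the crux is the choice of witness: given a state $w\in U$ (so $w\Vdash s$) and a parameter $a$, I would take $y=\gamma(a)$. Because $\gamma(a)\in 3\mathbb{N}+1$ while $w_3\subseteq 3\mathbb{N}+2$ by \eqref{Ew3}, the element $\gamma(a)$ never lies in $w_3$, so $\gamma(a)\in w_1\cup w_2$ and hence $w\Vdash P(\gamma(a))$. Moreover at every state $w'$ we have $w'\Vdash Q(\gamma(a))$ iff $\gamma(a)\in w'_1$ iff $w'\Vdash R(a)$, so $Q(\gamma(a))\leftrightarrow R(a)$ holds pointwise and in particular $w\Vdash Q(\gamma(a))\to R(a)$. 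Thus $w\Vdash\exists y(P(y)\wedge(Q(y)\to R(a)))$, which is precisely what \eqref{E1} demands of the states forcing $s$. This witness-selection step is the part I expect to carry the real content of the lemma.

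Formula \eqref{E2} requires that no state forces $\forall x R(x)$, i.e. that each state $w$ admits a parameter $a$ with $\gamma(a)\notin w_1$. Here I would exhibit an element of $(3\mathbb{N}+1)\smallsetminus w_1$ directly: for $w\in U$ the set $w_2$ is a nonempty subset of $\mathbf{v}_2\subseteq 3\mathbb{N}+1$ (by \eqref{Ev2}, \eqref{Eclo-}, \eqref{Ew2v2}) disjoint from $w_1$, while for the exceptional state $\mathbf{u}$ we have $\mathbf{u}_1=\mathbf{v}_1$, which is disjoint from $\mathbf{v}_2\subseteq 3\mathbb{N}+1$. Picking such an $m$ and noting that $\gamma(m)=m$ for $m\in 3\mathbb{N}+1$ gives $\gamma(m)\notin w_1$, so $w\not\Vdash R(m)$ and hence $w\not\Vdash\forall xR(x)$.

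Finally \eqref{E3} is immediate at every $w\in U$: there $w\Vdash s$, and by persistence every $\trianglerighteq w$ successor forces $s$, so the consequent $Q(x)\vee s$ is always true. The only state needing attention is $\mathbf{u}$ in $\mathcal{M}_2$, where $\mathbf{u}\not\Vdash s$; but $\mathbf{u}_2=\emptyset$ forces $\phi^{\mathbf{u}}_2(P)=\mathbf{u}_1=\phi^{\mathbf{u}}_2(Q)$, so $\mathbf{u}\Vdash P(a)$ already entails $\mathbf{u}\Vdash Q(a)$, and every proper $\trianglerighteq$-successor of $\mathbf{u}$ lies in $U$ and so forces $s$; this yields $\mathbf{u}\Vdash P(a)\to(Q(a)\vee s)$. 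I expect the main obstacle to be not any single hard computation but rather keeping the bookkeeping of the exceptional point $\mathbf{u}$ straight while pinning down the witness $\gamma(a)$ in \eqref{E1}.
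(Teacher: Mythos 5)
Your proposal is correct and follows essentially the same route as the paper: verify \eqref{E3} by noting that $s$ persists at all states in $U$ while $\phi^{\mathbf{u}}_2(P)\subseteq\phi^{\mathbf{u}}_2(Q)$ at the exceptional point, refute $\forall x R(x)$ at each state by producing an element of $\mathbf{v}_2\smallsetminus w_1$ via \eqref{Ev2}--\eqref{Ew2v2}, and witness \eqref{E1} with $y=\gamma(a)$, exactly the choice the paper imports from \cite{MOU2012}. The only (immaterial) differences are that you treat both models uniformly instead of invoking that ${\cal M}_1$ is an accessibility-closed submodel of ${\cal M}_2$, and for \eqref{E2} you use the fixed point $m\in 3\mathbb{N}+1$ itself as the counterexample to $R$, where the paper instead uses the $n\in 3\mathbb{N}+2$ with $\gamma(n)=3n+1\notin w_1$.
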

\begin{proof}
It is sufficient to consider ${\cal M}_2$ only, since ${\cal M}_1$
is an `accessibility-closed' submodel of ${\cal M}_2$.

${\cal M}_2$ clearly satisfies \eqref{E3} since $s$ is universally
true in every world of this model except for $\mathbf{u}$. But for
this world we also have $\phi^{\mathbf{u}}_2(P) \subseteq
\phi^{\mathbf{u}}_2(Q)$, so \eqref{E3} holds at ${\cal M}_2$.

Choose any $w \in W_2$. If $w \notin U$, then $w = \mathbf{u}$ and
$w_1 \cap \mathbf{v}_2 = \emptyset$. Therefore, it follows from
\eqref{Ew2v2} that for any $w \in W_2$ we can choose an $n$ such
that $3n + 1 \in \mathbf{v}_2 \smallsetminus w_1$. For this $n$ we
will have $3n + 1 \notin \phi^{w}_2(Q)$, and, therefore $n \notin
\phi^{w}_2(R)$. Hence \eqref{E2} holds as well.

Finally, consider \eqref{E1}. If for $w \in W_2$ we have $w
\Vdash_2 s$ then $w \in U$, therefore, reasoning exactly as in
\cite{MOU2012}(i.e. choosing $\gamma(a)$ for every $a \in
\mathbb{N}$) one can show that the consequent of \eqref{E1} is
true at $w$.
\end{proof}

\begin{lemma}\label{Limpl}
$T$ implicitly defines $s$.
\end{lemma}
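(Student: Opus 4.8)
\section*{Proof proposal}

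The plan is to show that in every model of $T$ the extension of the
propositional letter $s$ is completely fixed by the interpretations of $P$ and
$Q$; since these belong to the reduct that $\mathcal{M}$ and any rival
$\mathcal{N}$ would share, the two models cannot disagree about $s$. Concretely,
I would prove that for every model $\mathcal{M} = \langle W, \leq, \mathbf{w},
D, \phi \rangle$ of $T$ and every $w \in W$,
\[
w \Vdash s \quad\Longleftrightarrow\quad \phi^w(P) \not\subseteq \phi^w(Q),
\]
where $\phi^w(\Pi)$ denotes the extension of $\Pi$ at $w$. Writing $B = \{ w
\mid \phi^w(P) \not\subseteq \phi^w(Q) \}$, the content of the lemma is exactly
the equation $\phi(s) = B$, whose right-hand side manifestly depends only on
$\phi(P)$ and $\phi(Q)$.

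The inclusion $B \subseteq \phi(s)$ is the easy half and comes from
\eqref{E3}. Unfolding the intuitionistic clauses, \eqref{E3} holding throughout
$\mathcal{M}$ amounts to the statement that for every $w$, if $w \not\Vdash s$
then $\phi^w(P) \subseteq \phi^w(Q)$; contraposing this gives precisely $w \in B
\Rightarrow w \Vdash s$.

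The reverse inclusion $\phi(s) \subseteq B$ is the crux and uses \eqref{E1}
together with \eqref{E2} and the persistence of atomic predicates. Fix $w$ with
$w \Vdash s$. From \eqref{E2}, evaluated at $w$ itself, I obtain a world $w'
\geq w$ and an element $a_0 \in D$ with $a_0 \notin \phi^{w'}(R)$. Instantiating
\eqref{E1} at $w$ for this $a_0$ and using $w \Vdash s$ yields a witness $b \in
\phi^w(P)$ such that $w \Vdash Q(b) \to R(a_0)$, that is, $b \in \phi^{w''}(Q)
\Rightarrow a_0 \in \phi^{w''}(R)$ for all $w'' \geq w$. Taking $w'' = w'$ and
using $a_0 \notin \phi^{w'}(R)$ forces $b \notin \phi^{w'}(Q)$; but then, since
$\phi^w(Q) \subseteq \phi^{w'}(Q)$ by persistence of $Q$, also $b \notin
\phi^w(Q)$. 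Hence $b \in \phi^w(P) \smallsetminus \phi^w(Q)$ and $w \in B$, as
required.

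With $\phi(s) = B$ established, the lemma follows at once: any model
$\mathcal{N}$ of $T$ sharing the reduct of $\mathcal{M}$ (same frame, domain and
interpretations of $P$, $Q$, $R$) computes the same set $B$, hence assigns $s$
the same extension as $\mathcal{M}$, and so cannot differ from $\mathcal{M}$
only in $s$. I expect the one genuinely delicate point to be the reverse
inclusion, specifically the observation that the witness $b$ supplied by
\eqref{E1} already lives at $w$, so that the failure of $Q(b)$ detected at the
higher world $w'$ can be pulled back to $w$ by persistence of $Q$; the role of
\eqref{E2}, which manufactures a point where $R$ fails, must be dovetailed with
this. It is worth noting that the determining set $B$ is not itself
intuitionistically first-order expressible, which is exactly what will leave
room for explicit definability to fail afterwards.
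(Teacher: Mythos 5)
Your proposal is correct and follows essentially the same route as the paper: both directions hinge on the equivalence $w \Vdash s \Leftrightarrow \phi^w(P) \not\subseteq \phi^w(Q)$, with \eqref{E1} and \eqref{E2} producing the witness $b \in \phi^w(P) \smallsetminus \phi^w(Q)$ and \eqref{E3} giving the converse. The only cosmetic difference is that the paper picks $a \notin \phi^w(R)$ directly at $w$ (using persistence of the atomic $R$ to get the counterexample at $w$ itself), whereas you detour through a higher world $w'$ and pull the failure of $Q(b)$ back to $w$ by persistence --- the same argument in a slightly longer form.
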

\begin{proof}
Let ${\cal M} = \langle W, \leq, \mathbf{w}, D, \phi \rangle$ and
let $w \in W$. If $w \Vdash_{{\cal M}} s$ then choose $a \notin
\phi^{w}(R)$ (such an $a$ exists since ${\cal M} \models
\eqref{E2}$) and consider $b$ such that $w \Vdash_{{\cal M}}  P(b)
\wedge (Q(b) \to R(a))$. For this $b$ we will have $b \in
\phi^{w}_{{\cal M}}(P) \smallsetminus \phi^{w}_{{\cal M}}(Q)$.

In the other direction, let for some $b$ it is true that $b \in
\phi^{w}_{{\cal M}}(P) \smallsetminus \phi^{w}_{{\cal M}}(Q)$.
Then, by ${\cal M} \models \eqref{E3}$, we will have $w
\Vdash_{{\cal M}} s$.
\end{proof}

\begin{definition}\label{DefinitionofZ}
Relative to the models ${\cal M}_1$ and ${\cal M}_2$, the relation
$Z$ is defined as follows:
\begin{enumerate}
\item $Z \subseteq \bigcup_{k \geq 0} [ (W_1 \times D_1^{k})
  \times (W_2 \times D_2^{k})] \cup [ (W_2 \times D_2^{k})
  \times (W_1 \times D_1^{k})]$;
\item $\langle (A,B,C), \vec{d} \rangle \: Z \: \langle (D,E,F),
\vec{e} \rangle$,
  where $\vec{d} \in D_i^{k}$, $\vec{e} \in D_j^{k}$, $\{i,j\} = \{1,2\}$,
  if and only if the following conditions hold:
\begin{enumerate}
\item Relation $[\vec{d} \mapsto \vec{e}]$ is a bijection;

\item  If $1 \leq l \leq k$, then  $d_l \in (3\mathbb{N} \cup
3\mathbb{N} + 2) \Leftrightarrow e_l \in (3\mathbb{N} \cup
3\mathbb{N} + 2)$;

\item  If $1 \leq l \leq k$, then  $d_l \in \mathbb{N}_0
\Leftrightarrow e_l \in \mathbb{N}_0$;

\item If $1 \leq l,m \leq k$ then $d_l = \widetilde{d_m}
\Leftrightarrow e_l = \widetilde{e_m}$;

\item If $1 \leq l \leq k$ and  $d_l \in A$ then $e_l \in D$;

\item If $1 \leq l \leq k$ and $d_l \in B$, then $e_l \in D \cup
E$.
\end{enumerate}
\end{enumerate}
\end{definition}

It is easy to see that in the case when  both $\langle (A,B,C),
\vec{d} \rangle \: Z \: \langle (D,E,F), \vec{e} \rangle$ and
 $\langle (D,E,F), \vec{e} \rangle \: Z \: \langle (A,B,C), \vec{d}
\rangle$ hold, conditions 2(e),(f) of this definition are
equivalent, modulo other restrictions,  to the following ones:
\[
d_l\in A \text{ iff } e_l\in D;
\]
\[
d_l\in B \text{ iff } e_l\in E,
\]

for every $1 \leq l \leq k$.

Further, it follows from conditions 2(b),(c) and \eqref{En0} that
\[
d_l \in Cl(3\mathbb{N} \cup 3\mathbb{N} + 2) \text{ iff }  e_l \in
Cl(3\mathbb{N} \cup 3\mathbb{N} + 2);
\]
\[
d_l \in Cl^-(3\mathbb{N} \cup 3\mathbb{N} + 2) \text{ iff }  e_l
\in Cl^-(3\mathbb{N} \cup 3\mathbb{N} + 2),
\]
for every $1 \leq l \leq k$.

\begin{lemma}\label{Zanasimulation}
 The relation $Z$ in Definition \ref{DefinitionofZ} is a CD-asimulation between the $L(P,Q,R)$-reducts of G-models ${\cal M}_1$ and ${\cal M}_2$.
\end{lemma}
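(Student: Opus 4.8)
The plan is to verify, clause by clause, each of the defining conditions of a CD-asimulation (as given in \cite{MOU2012}) for the relation $Z$. Since $Z$ is stated symmetrically in the two models --- it ranges over both $(W_1 \times D_1^k) \times (W_2 \times D_2^k)$ and $(W_2 \times D_2^k) \times (W_1 \times D_1^k)$ --- it suffices to check the clauses for an arbitrary related pair $\langle (A,B,C), \vec d \rangle \: Z \: \langle (D,E,F), \vec e \rangle$, so that the verification applies uniformly in whichever direction $Z$ is traversed.

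First I would dispatch the atomic clause. Recalling the semantics forced by the definitions of $\phi_i$, we have $w \Vdash Q(d_l)$ iff $d_l \in A$, $w \Vdash P(d_l)$ iff $d_l \in A \cup B$, and $w \Vdash R(d_l)$ iff $\gamma(d_l) \in A$. The $Q$-case is exactly condition 2(e), and the $P$-case follows by combining 2(e) and 2(f). For the $R$-case the key observation is that the first component of \emph{every} world of $\mathcal{M}_1$ and $\mathcal{M}_2$ is closed (by definition of $U$ for the worlds in $U$, and because $\mathbf{u}_1 = \mathbf{v}_1$ is closed by \eqref{Ecloneg} for $\mathbf{u}$). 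Since $\gamma(x)$ is either $x$ or the companion $\tilde{x}$, closedness of $A$ yields $\gamma(d_l) \in A \Leftrightarrow d_l \in A$, and likewise for $D$; hence $R$ and $Q$ have identical extensions throughout both models and the $R$-clause collapses to the $Q$-clause already handled.

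Next I would treat the monotonicity clause for $\trianglelefteq$ and the domain (quantifier) clause, each of which, in the asimulation format, is a ``forth'' condition that reverses the direction of $Z$. For monotonicity, given a world $w'' \trianglerighteq (D,E,F)$ in the target, I would transport the refinement back through the bijection $[\vec d \mapsto \vec e]$ and the block-correspondence to produce a matching $w''' \trianglerighteq (A,B,C)$, staying inside $U$ (or landing on $\mathbf{u}$) by invoking the stability properties \eqref{Etriangle}--\eqref{Ew2v2}; conditions 2(b)--(d) are preserved automatically since the tuples are unchanged, while 2(e),(f) persist because moving up only coherently enlarges the first two blocks. For the domain clause, given a fresh element $a$ of the target domain $\mathbb{N}$, I would select a witness $b \in \mathbb{N}$ replicating the entire ``$Z$-type'' of $a$: its membership in $3\mathbb{N} \cup (3\mathbb{N}+2)$ and in $\mathbb{N}_0$ (for 2(b),(c)), its companion relationships to the already-named elements (for 2(d), via \eqref{Etilde}), and its block membership relative to $(A,B,C)$ (for 2(e),(f)). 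Such a $b$ always exists because $\mathbb{N}_0$ is infinite by \eqref{En0inf} and closures of finite sets are finite by \eqref{Eclofin}, leaving room to pick $b$ fresh when $a$ introduces no new constraint, and forcing $b = \widetilde{e_m}$ precisely when $a = \widetilde{d_m}$ is the companion of a named element.

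The main obstacle I anticipate is the domain clause in combination with the constant-domain requirement: the witness $b$ must respect the companion structure 2(d), the closure-type constraints 2(b),(c), and the block constraints 2(e),(f) \emph{simultaneously}, and the choice must remain coherent with the selections forced at accessible worlds, which is exactly what distinguishes a CD-asimulation from an ordinary one. The delicate case is when $a$ lies in $3\mathbb{N} \cup (3\mathbb{N}+2)$, so that its companion $\tilde{a} \in 3\mathbb{N}+1$ governs the truth of $R(a)$; here I must ensure, using \eqref{Ew2w3} together with closedness of the first block and the identities \eqref{Eiff1},\eqref{Eiff2}, that $b$ and $\tilde{b}$ fall into the blocks of $(D,E,F)$ mirroring the blocks of $a$ and $\tilde{a}$ in $(A,B,C)$. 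Once this block-matching is secured, the remaining clauses follow from the bookkeeping already in place, completing the verification that $Z$ is a CD-asimulation.
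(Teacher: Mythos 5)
Your handling of the atomic clause and of the source-side domain clause does track the paper's proof (including the correct observation that, since every first block in both models is closed, $R$ and $Q$ have the same extension at every world, and the correct move of forcing $b = \widetilde{e_m}$ when the new element is a companion $\widetilde{d_m}$ of a named one). The genuine gap is in the accessibility (monotonicity) clause, which is the bulk of the paper's proof. ``Transporting the refinement back through the bijection'' prescribes where only the $k$ named elements $\vec{d}$ must go; the entire difficulty lies in distributing the infinitely many \emph{unnamed} elements so that the resulting triple is an actual world of the source model. The paper does this by an explicit closure construction, $J = Cl((A \smallsetminus \vec{d}) \cup [\vec{d} \mapsto \vec{e}]^{-1}(G))$, $K = ((B \smallsetminus \vec{d}) \cup [\vec{d} \mapsto \vec{e}]^{-1}(H)) \smallsetminus J$, $L = ((C \smallsetminus \vec{d}) \cup [\vec{d} \mapsto \vec{e}]^{-1}(I)) \smallsetminus J$, and must then prove that $(J,K,L)$ is a quasi-partition lying in $U$ ($J$ closed, $K \subseteq \mathbf{v}_2$, $L$ an infinite subset of $3\mathbb{N}+2$, $\mathbf{v} \trianglelefteq (J,K,L)$) and that $(A,B,C) \trianglelefteq (J,K,L)$; none of this is automatic, and none of it appears in your outline. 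Moreover, your justification for conditions 2(e),(f) --- that they ``persist because moving up only coherently enlarges the first two blocks'' --- is not an argument for what actually has to be shown: as you yourself note, the new $Z$-link runs in the \emph{reversed} direction, so what is needed is $e_l \in G \Rightarrow d_l \in J$ and $e_l \in H \Rightarrow d_l \in J \cup K$ (and in fact the full biconditionals of the paper's Claim 3). Proving these is delicate precisely because of the closure operator in $J$: a named $d_l$ can be captured by $J$ only because its companion $d_m$ has a partner in the preimage of $G$, and pushing this back to $e_l \in G$ requires the companion condition 2(d) together with closedness of $G$.

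A second, independent omission: the construction cannot be uniform, yet you never split cases. When the source world is $\mathbf{u}$, so that $B = \emptyset$ and $C = \mathbf{v}_2 \cup \mathbf{v}_3$, the transport-and-close recipe leaves the infinitely many elements of $\mathbf{v}_2 \smallsetminus Cl(\vec{d})$ sitting in the third block of the new triple, violating the requirement $L \subseteq \mathbf{v}_3$ for membership in $U$; since $W_1 = U$ and $W_2 = U \cup \{\mathbf{u}\}$, such a triple is simply not a world, and the clause fails. This is exactly why the paper runs a separate second case, rerouting $Cl^-(3\mathbb{N}+2) \smallsetminus \vec{d}$ into $K$ by hand. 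A smaller but related looseness occurs in your fifth (constant-domain) clause: a fresh element of the \emph{target} can lie in its third block, so the source witness cannot ``mirror'' its block; instead one must place the witness where conditions 2(e),(f) cannot be triggered, which is why the paper chooses it inside $(I \cap (3\mathbb{N}+2)) \smallsetminus Cl(\vec{d})$, respectively $Cl^-(I \cap (3\mathbb{N}+2)) \smallsetminus Cl(\vec{d})$, relying on \eqref{Ew3} and \eqref{Ew2w3}. Without the explicit construction, the $U$-membership and ordering claims, the biconditional block-matching, and the $\mathbf{u}$-case, the central clause of the lemma remains unproven.
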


\begin{proof}
The first condition in definition of asimulation is true by
definition. For the second condition, assume that $v, \vec{d} Z w,
\vec{e}$ and that $v \Vdash_i P[\vec{\mathbf d}]$, where $P(x_l)$
is atomic, $\vec{d} \in D_i^{k}$, and $1 \leq l \leq k$. Thus we
have $v \Vdash_i P[{\mathbf d}]$, where $d = d_l$, so that $d \in
v_1 \cup v_2$; it follows that $e = e_l \in w_1 \cup w_2$, by
Definition \ref{DefinitionofZ}, showing that $v \Vdash_j
P[\vec{\mathbf e}]$. The proof for atomic formulas $Q(x_l)$ is
similar. Finally, assume that $v \Vdash_i R[\vec{\mathbf d}]$,
where $R(x_l)$ is atomic, $\vec{d} \in D_i^{k}$, and $1 \leq l
\leq k$. Thus we have $v \Vdash_i R[{\mathbf d}]$, where $d =
d_l$, so that $\gamma(d) \in v_1$; since $v_1$ is closed, we have
$d \in v_1$ but then by Definition \ref{DefinitionofZ} it follows
that $e = e_l \in w_1$, and, given that $w_1$ is closed as well,
we get $\gamma(e) \in w_1$ and $w \Vdash_j R[{\mathbf e}]$ for the
corresponding $j \in \{ 1,2 \}$.

For the third condition, assume that $t, \vec{d} Z u, \vec{e}$,
where $t = (A,B,C)$, $u = (D,E,F)$, and $u \leq_j v$, $v =
(G,H,I)$. By definition, $u \trianglelefteq v$. Two cases arise
here: $B$ is infinite, or $B = \emptyset$.

In the first case, we clearly have $(A,B,C) \in U$. We define $w =
(J,K,L)$ as follows:
\begin{eqnarray*}
 J & = & Cl((A \smallsetminus \vec{d}) \cup [\vec{d} \mapsto \vec{e}]^{-1}(G));\\
 K & = & ((B \smallsetminus \vec{d}) \cup [\vec{d} \mapsto
 \vec{e}]^{-1}(H))\smallsetminus J;\\
 L & = & ((C \smallsetminus \vec{d}) \cup [\vec{d} \mapsto \vec{e}]^{-1}(I))\smallsetminus J.
\end{eqnarray*}

\emph{Claim 1}: $(J,K,L) \in U$.

 Sets $(A \smallsetminus \vec{d}) \cup [\vec{d}
\mapsto \vec{e}]^{-1}(G)$, $(B \smallsetminus \vec{d}) \cup
[\vec{d} \mapsto
 \vec{e}]^{-1}(H)$ and $(C \smallsetminus \vec{d}) \cup [\vec{d} \mapsto
 \vec{e}]^{-1}(I)$ are clearly pairwise disjoint and their union is $\mathbb{N}$. But $J$,
 $K$, $L$ differ from these sets only in that all the elements of the set $Cl^-((A \smallsetminus \vec{d}) \cup [\vec{d}
\mapsto \vec{e}]^{-1}(G))$ were removed
 from the last two components of $(J,K,L)$ and moved to the first
 one. Therefore $J$,
 $K$, $L$ are pairwise disjoint, too, and we have  $J \cup K \cup L =
 \mathbb{N}$. Moreover,  set $3\mathbb{N}\smallsetminus\vec{d}$ is obviously infinite, and by \eqref{E3n} we have
 \[
 (3\mathbb{N}\smallsetminus\vec{d}) \subseteq (\mathbf{v}_1\smallsetminus\vec{d}) \subseteq (A\smallsetminus\vec{d}) \subseteq
 J.
 \]
 Therefore, $J$ is infinite and it is closed by definition.

 \emph{Subclaim 1.1}. $L$ and $K$ are infinite.

 Since $(A,B,C) \in U$, $C$ is, by \eqref{Ew3}, an infinite subset of $(3\mathbb{N} +
 2)$.
 But then, since $\vec{d}$ is finite, $(C \smallsetminus
 \vec{d}) $ is infinite, too. So, if $(C \smallsetminus
 \vec{d}) \smallsetminus J$ is finite, then $(C \smallsetminus
 \vec{d}) \cap J$ must be infinite. Given that
 \[
 (C \smallsetminus
 \vec{d}) \cap  Cl(A \smallsetminus
 \vec{d}) \subseteq  C \cap  Cl(A) = C \cap A = \emptyset,
 \]
we obtain that $(C \smallsetminus
 \vec{d}) \cap Cl([\vec{d} \mapsto
 \vec{e}]^{-1}(G))$ must be infinite. But $Cl([\vec{d} \mapsto
 \vec{e}]^{-1}(G))$ is a closure of a finite set and therefore by \eqref{Eclofin} is itself finite, a
 contradiction. Therefore,  $(C \smallsetminus
 \vec{d}) \smallsetminus J \subseteq L$ is infinite.

 Moreover, if $n \in C \smallsetminus
 \vec{d}$ then by \eqref{Ew2w3} $\tilde{n} \in B$. Since for all $m,n \in \mathbb{N}$ we have $m \neq n \Rightarrow \tilde{m} \neq \tilde{n}$,
  the set $\{ \tilde{n} \mid n \in C
 \smallsetminus \vec{d} \}  \smallsetminus \vec{d}$ is an infinite subset of $K$.

\emph{Subclaim 1.2}. $K \subseteq \mathbf{v}_2$.

By \eqref{Ev2} we have $H \cap \vec{e} \subseteq H \subseteq
\mathbf{v}_2$ and also $B \smallsetminus \vec{d} \subseteq B
\subseteq \mathbf{v}_2$. If for some $1 \leq l \leq k$ $e_l \in H$
and $d_l \notin \mathbf{v}_2$, then $d_l \notin B$, therefore,
$d_l \in A \cup C$. If $d_l \in A$, then $e_l \in D \subseteq G$
which contradicts the assumption that $e_l \in H$. If $d_l \in C$,
then by \eqref{Ew3} $d_l \in 3\mathbb{N} + 2$, therefore, by
condition 2(b) of Definition \ref{DefinitionofZ}, $e_l \in
3\mathbb{N} \cup (3\mathbb{N} + 2)$ and so $e_l \notin
\mathbf{v}_2$, again a contradiction. So we have $[\vec{d} \mapsto
 \vec{e}]^{-1}(H)) \subseteq \mathbf{v}_2$ and, in sum, $K \subseteq \mathbf{v}_2$.

This completes the proof of our Claim 1.

\emph{Claim 2}: $(A,B,C) \trianglelefteq (J,K,L)$.

If $a \in A$, and $a$ is in $\vec{d}$, say $a = d_l$, then $e_l
\in D$, so $e_l \in G$, from which it follows that $d_l = a \in
J$. So $A \subseteq J$. Since $I \subseteq F$, we have $[\vec{d}
\mapsto \vec{e}]^{-1}(I) \subseteq [\vec{d} \mapsto
\vec{e}]^{-1}(F) \subseteq C$, showing that $L \subseteq C$.

\emph{Claim 3}: For $1 \leq l \leq k$,
\[
d_l\in J \text{ iff } e_l\in G;
\]
\[
d_l\in K \text{ iff } e_l\in H.
\]

For the first part, if $d_l \in J$ then either $d_l \in
 Cl(A \smallsetminus
 \vec{d})$ or $d_l \in Cl([\vec{d} \mapsto \vec{e}]^{-1}(G))$.
 In the former case $d_l \in Cl(A) = A$, therefore $e_l \in D \subseteq G$.
 In the latter case either $d_l \in [\vec{d} \mapsto \vec{e}]^{-1}(G)$ and then clearly $e_l \in G$ or
 there exists $1 \leq m \leq k$ such that $d_l = \widetilde{d_m}$ and $d_m \in [\vec{d} \mapsto
 \vec{e}]^{-1}(G)$. But then we also have $e_m \in G$ and, by
condition 2(e) of Definition \ref{DefinitionofZ},
 $e_l = \widetilde{e_m}$, and, since $G$ is closed, we get $e_l \in G$.

 In the other direction, if  $e_l \in G$, then $d_l \in [\vec{d} \mapsto
 \vec{e}]^{-1}(G)$ whence $d_l \in J$.

 For the second part, if $d_l \in K$, then  $d_l \in [\vec{d} \mapsto
 \vec{e}]^{-1}(H)$ whence  $e_l \in H$.

 In the other direction, if $e_l \in H$, then clearly $d_l \in [\vec{d} \mapsto
 \vec{e}]^{-1}(H)$. If, moreover,  $d_l \in J$, then either $d_l \in
 Cl(A \smallsetminus
 \vec{d})$ or $d_l \in Cl([\vec{d} \mapsto \vec{e}]^{-1}(G))$. In
 the former case $d_l \in Cl(A) = A$, therefore, we must have $e_l
 \in G$ which contradicts the choice of $e_l$. In the latter case,
 since we clearly have that  $d_l \notin [\vec{d} \mapsto
 \vec{e}]^{-1}(G)$, there must be $1 \leq m \leq k$ such that $d_l = \widetilde{d_m}$ and $d_m \in [\vec{d} \mapsto
 \vec{e}]^{-1}(G)$. But then we also have $e_m \in G$ and, by
condition 2(e) of Definition \ref{DefinitionofZ},
 $e_l = \widetilde{e_m}$, and, since $G$ is closed, we get $e_l \in G$, which again contradicts the choice of $e_l$.
 Therefore, $d_l \in [\vec{d} \mapsto
 \vec{e}]^{-1}(H) \smallsetminus J \subseteq K$.

 In the second case we clearly have $(A, B, C) = \mathbf{u}$ and
 we define $w = (J,K,L)$ as follows:
\begin{eqnarray*}
 J & = & Cl((A \smallsetminus \vec{d}) \cup [\vec{d} \mapsto \vec{e}]^{-1}(G));\\
 K & = & ((Cl^-(3\mathbb{N} + 2) \smallsetminus \vec{d}) \cup [\vec{d} \mapsto
 \vec{e}]^{-1}(H))\smallsetminus J;\\
 L & = & ((3\mathbb{N} + 2) \smallsetminus \vec{d}) \cup [\vec{d} \mapsto \vec{e}]^{-1}(I))\smallsetminus J.
\end{eqnarray*}
We need now to reinstate our previous claims for these new
definitions.

\emph{Claim 1}: $(J,K,L) \in U$.

Since $A = \mathbf{u}_1 = \mathbf{v}_1$, we clearly have $J \cup K
\cup L = \mathbb{N}$. Moreover, $J \cap K = J \cap L = \emptyset$
by definition, and since it is clear that $(Cl^-(3\mathbb{N} + 2)
\smallsetminus \vec{d}) \cup [\vec{d} \mapsto
 \vec{e}]^{-1}(H)$ and $((3\mathbb{N} + 2) \smallsetminus \vec{d}) \cup [\vec{d} \mapsto
 \vec{e}]^{-1}(I)$ are disjoint, we also get that $K$ and $L$ are
 disjoint. Further, $J$ is closed by definition and contains $A \smallsetminus
 \vec{d}$ as an infinite subset.

\emph{Subclaim 1.1}. $L$ is an infinite subset of $3\mathbb{N} +
2$, $K$ is infinite.

 Since $\vec{d}$ is finite, $((3\mathbb{N} + 2) \smallsetminus
 \vec{d})$ is infinite, too. So, if $((3\mathbb{N} + 2) \smallsetminus
 \vec{d}) \smallsetminus J$ is finite, then $((3\mathbb{N} + 2) \smallsetminus
 \vec{d}) \cap J$ must be infinite. Given that
 \[
 ((3\mathbb{N} + 2) \smallsetminus
 \vec{d}) \cap  Cl(A \smallsetminus
 \vec{d}) \subseteq  (3\mathbb{N} + 2) \cap  Cl(A) \subseteq C \cap A  = \emptyset,
 \]
 we obtain that $((3\mathbb{N} + 2) \smallsetminus
 \vec{d}) \cap Cl([\vec{d} \mapsto
 \vec{e}]^{-1}(G))$ must be infinite. But $Cl([\vec{d} \mapsto
 \vec{e}]^{-1}(G))$ is a closure of a finite set and therefore, by \eqref{Eclofin} is itself finite, a
 contradiction. Therefore, $((3\mathbb{N} + 2) \smallsetminus
 \vec{d}) \smallsetminus J \subseteq L$ is infinite.

 Moreover, it is easy to see that if $n \in (3\mathbb{N} + 2) \smallsetminus
 \vec{d}$ then $\tilde{n} \in Cl^-(3\mathbb{N} + 2)\smallsetminus
 \vec{d}$. Since for all $m,n \in \mathbb{N}$ we have $m \neq n \Rightarrow \tilde{m} \neq \tilde{n}$, the set $Cl^-(3\mathbb{N} + 2)\smallsetminus
 \vec{d}$ is an infinite subset of $K$.
 It remains to verify $[\vec{d} \mapsto \vec{e}]^{-1}(I) \subseteq
(3\mathbb{N} + 2)$, so that we can be sure that $L \subseteq
3\mathbb{N} + 2$. Since $(A, B, C) = \mathbf{u}$, we know that
$(G,H,I) \in U$ and therefore $I \subseteq (3\mathbb{N} + 2)$. So,
by condition  2(b) of Definition \ref{DefinitionofZ}, if for $1
\leq l \leq k$ we have $d_l \in [\vec{d} \mapsto
\vec{e}]^{-1}(I)$, then $e_l \in I \subseteq (3\mathbb{N} + 2)$
and $d_l \in 3\mathbb{N} \cup (3\mathbb{N} + 2)$. But if $d_l \in
3\mathbb{N}$, then, by \eqref{E3n}, $d_l \in A$, therefore $e_l
\in G$, a contradiction. So we must have $d_l \in (3\mathbb{N} +
2)$

\emph{Subclaim 1.2}. $K \subseteq \mathbf{v}_2$.

By \eqref{Ev2} we have $H \cap \vec{e} \subseteq H \subseteq
\mathbf{v}_2$ and also $Cl^-(3\mathbb{N} + 2) \smallsetminus
\vec{d} \subseteq \mathbf{v}_2$. If, for some $1 \leq l \leq k$,
$e_l \in H$ and $d_l \notin \mathbf{v}_2$, then $d_l \in A \cup
(3\mathbb{N} + 2)$. If $d_l \in A$, then $e_l \in D \subseteq G$
which contradicts the assumption that $e_l \in H$. If $d_l \in
3\mathbb{N} + 2$, then, by condition  2(b) of Definition
\ref{DefinitionofZ}, $e_l \in 3\mathbb{N} \cup (3\mathbb{N} + 2)$
and so $e_l \notin \mathbf{v}_2$, which is a contradiction by
\eqref{Ev2}. So we have $[\vec{d} \mapsto
 \vec{e}]^{-1}(H) \subseteq \mathbf{v}_2$ and, in sum, $K \subseteq \mathbf{v}_2$.

This completes the proof of our Claim 1.

\emph{Claim 2}: $(A,B,C) \trianglelefteq (J,K,L)$.

$A \subseteq J$ can be verified as in Case 1 and $L \subseteq C$
follows from Claim 1.1 and the fact that $(A, B, C) = \mathbf{u}$.

\emph{Claim 3}: For $1 \leq l \leq k$,
\[
d_l\in J \text{ iff } e_l\in G;
\]
\[
d_l\in K \text{ iff } e_l\in H.
\]

This Claim, again, can be verified as in Case 1. So, we have
completed the proof for the third condition.

For the fourth condition in definition of asimulation, assume that
$t \in W_i$, $(t,\vec{d} Z u, \vec{e})$, where $t = (G,H,I)$, $u =
(J,K,L)$, and $f \in D_i$. Three cases are possible:

\emph{Case 1}. $f \notin Cl(\vec{d})$. Then three subcases are
possible:

\emph{Subcase 1.1} $f \in  \mathbb{N}_0$. Then choose any $g \in
\mathbb{N}_0 \smallsetminus Cl(\vec{e})$. This is possible since
this set is infinite. By \eqref{E3n} we have $\mathbb{N}_0
\subseteq \mathbf{v}_1 \subseteq u_1$ and so we are done.

\emph{Subcase 1.2} $f \in 3\mathbb{N} \cup 3\mathbb{N} + 2$. Then
choose any $g \in 3\mathbb{N} \smallsetminus Cl(\vec{e})$. This is
possible since this set is infinite. By \eqref{E3n} we have
$3\mathbb{N} \subseteq \mathbf{v}_1 \subseteq u_1$ and so we are
done.

\emph{Subcase 1.3.} $f \in Cl^-(3\mathbb{N} \cup 3\mathbb{N} +
2)$. Then choose any $g \in Cl^-(3\mathbb{N})$. This is possible
since this set is infinite. By \eqref{E3n} we have
$Cl^-(3\mathbb{N}) \subseteq \mathbf{v}_1 \subseteq u_1$ and so we
are done.

\emph{Case 2}. $f \in \vec{d}$. Then set $g: = e_l$.

\emph{Case 3}. $f \in Cl^-(\vec{d})$. This means that for some $1
\leq l \leq k$ $f = \widetilde{d_l} \neq d_l$. Therefore, by
conditions 2(b)--(d) of Definition \ref{DefinitionofZ}, $f, d_l,
e_l, \widetilde{e_l} \in Cl(3\mathbb{N} \cup 3\mathbb{N} + 2)$,
and $\widetilde{e_l} \neq e_l$. So, in particular, by \eqref{En0},
we have $f, \widetilde{e_l} \notin \mathbb{N}_0$. Then set $g :=
\widetilde{e_l}$. Let us make sure that conditions of Definition
\ref{DefinitionofZ} are satisfied. For example, by \eqref{Eiff1},
\eqref{Eiff2} we have:
\begin{align*}
f = \widetilde{d_l} \in 3\mathbb{N} \cup 3\mathbb{N} + 2
&\Leftrightarrow d_l \in Cl^-(3\mathbb{N} \cup 3\mathbb{N} +
2)\\
&\Leftrightarrow e_l \in Cl^-(3\mathbb{N} \cup 3\mathbb{N} +
2)\\
&\Leftrightarrow g = \widetilde{e_l} \in 3\mathbb{N} \cup
3\mathbb{N} + 2.
\end{align*}
 Further, if $f =
\widetilde{d_l} \in G$ then, since $G$ is closed, $d_l \in G$, but
then also $e_l \in J$ and, by closure of $J$, $g = \widetilde{e_l}
\in J$. Moreover, if $f = \widetilde{d_l} \in H$, then $H \neq
\emptyset$, so $t \in U$ and, by \eqref{Ew3}, $d_l \in I \subseteq
3\mathbb{N} + 2$. But then, by condition 2(b) of Definition
\ref{DefinitionofZ}, $e_l \in 3\mathbb{N} \cup 3\mathbb{N} + 2
\subseteq J \cup L$, where the latter inclusion holds due to
\eqref{Ev2}--\eqref{Ew3}. Therefore, $g = \widetilde{e_l} \in
Cl^-(J \cup L) \subseteq K$ by \eqref{Ew2w3} and since by closure
of $J$ we have $Cl^-(J) = \emptyset$.

The fifth condition is proved by a similar argument. Namely,
assume that $t \in W_i$, $(t,\vec{d} Z u, \vec{e})$, where $t =
(G,H,I)$, $u = (J,K,L)$, and $g \in D_j$.Three cases are possible:

\emph{Case 1}. $g \notin Cl(\vec{e})$. Then three subcases are
possible:

\emph{Subcase 1.1} $g \in \mathbb{N}_0$. Then choose any $f \in
\mathbb{N}_0 \smallsetminus Cl(\vec{d})$. This is possible since
this set is infinite. By \eqref{E3n} we have $\mathbb{N}_0
\subseteq \mathbf{v}_1 \subseteq u_1$ and so we are done.

\emph{Subcase 1.2} $g \in 3\mathbb{N} \cup 3\mathbb{N} + 2$. Then
choose any $f \in (I \cap  (3\mathbb{N} + 2)) \smallsetminus
Cl(\vec{d})$. This is possible since it follows from
\eqref{Eclofin}, \eqref{Ew3} that this set is infinite.

\emph{Subcase 1.3.} $g \in Cl^-(3\mathbb{N} \cup 3\mathbb{N} +
2)$. Then $g \in J \cup K$. Choose any $f \in Cl^-(I \cap
(3\mathbb{N} + 2)) \smallsetminus Cl(\vec{d})$. This is possible
since this set is infinite. Indeed, $I \cap (3\mathbb{N} + 2)$ is
infinite by \eqref{Ew3}, but then, by \eqref{Eclofin}, $Cl^-(I
\cap (3\mathbb{N} + 2))$ is infinite and $Cl(\vec{d})$ is finite.
By \eqref{Ew2w3} we have $f \in H$ and so we are done.

\emph{Case 2}. $g \in \vec{e}$. Then set $f: = d_l$.

\emph{Case 3}. $g \in Cl(\vec{e}) \smallsetminus \vec{e}$. This
means that for some $1 \leq l \leq k$ $g = \widetilde{e_l} \neq
e_l$. Therefore, by conditions 2(b)--(d) of Definition
\ref{DefinitionofZ}, $d_l, \widetilde{d_l}, e_l, g \in
Cl(3\mathbb{N} \cup 3\mathbb{N} + 2)$, and $\widetilde{d_l} \neq
d_l$. So in particular, by \eqref{En0}, $\widetilde{d_l}, g \notin
\mathbb{N}_0$. Then set $f := \widetilde{d_l}$. Let us make sure
that conditions of Definition \ref{DefinitionofZ} are satisfied.
For example, by \eqref{Eiff1}, \eqref{Eiff2} we have:
\begin{align*}
g = \widetilde{e_l} \in 3\mathbb{N} \cup 3\mathbb{N} + 2
&\Leftrightarrow e_l \in Cl^-(3\mathbb{N} \cup 3\mathbb{N} +
2)\\
&\Leftrightarrow d_l \in Cl^-(3\mathbb{N} \cup 3\mathbb{N} + 2)\\
&\Leftrightarrow f = \widetilde{d_l}\in 3\mathbb{N} \cup
3\mathbb{N} + 2.
\end{align*}
Further, if $f = \widetilde{d_l} \in G$ then, since $G$ is closed,
$d_l \in G$, but then also $e_l \in J$ and, by closure of $J$, $g
= \widetilde{e_l} \in J$. Moreover, if $f = \widetilde{d_l} \in
H$, then $H \neq \emptyset$, so $t \in U$ and, by \eqref{Ew3},
$d_l \in I \subseteq 3\mathbb{N} + 2$. But then, by condition 2(b)
of Definition \ref{DefinitionofZ}, $e_l \in 3\mathbb{N} \cup
3\mathbb{N} + 2 \subseteq J \cup L$, where the latter inclusion
holds due to \eqref{Ev2}--\eqref{Ew3}. Therefore, $g =
\widetilde{e_l} \in Cl^-(J \cup L) \subseteq K$ by \eqref{Ew2w3}
and since by closure of $J$ we have $Cl^-(J) = \emptyset$.
\end{proof}
\begin{theorem}
Intuitionistic predicate logic of constant domains does not have
Beth definability property.
\end{theorem}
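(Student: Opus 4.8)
The plan is to assemble the two halves that are already in place. Lemma~\ref{Limpl} shows that $T$ defines $s$ implicitly, so to defeat Beth's property it remains only to show that $T$ does \emph{not} define $s$ explicitly. Since $s$ is the only symbol to be eliminated and is propositional, an explicit definition would be an $L(P,Q,R)$-sentence $\Theta$ with $T \models (\Theta \leftrightarrow s)$. To rule this out I would exhibit two models of $T$ together with two worlds that agree on every $s$-free formula yet disagree on $s$; the witnesses are already before us, namely the root $\mathbf{v}$ of $\mathcal{M}_1$ and the root $\mathbf{u}$ of $\mathcal{M}_2$.

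First I would record the two forcing facts about $s$. Because $\phi_i(s) = U$ and $\mathbf{v} \in U$, we have $\mathbf{v} \Vdash_1 s$. On the other hand $\mathbf{u} \notin U$: membership in $U$ requires $\mathbf{v} \trianglelefteq \mathbf{u}$ and hence $\mathbf{u}_3 \subseteq \mathbf{v}_3 = 3\mathbb{N}+2$, whereas $\mathbf{u}_3 = \mathbf{v}_2 \cup \mathbf{v}_3$ with $\mathbf{v}_2 = Cl^-(3\mathbb{N}+2)$ nonempty; thus $\mathbf{u} \not\Vdash_2 s$. By Lemma~\ref{Lsat} both $\mathcal{M}_1$ and $\mathcal{M}_2$ are models of $T$, so these two worlds are legitimate test points.

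Next I would invoke the asimulation. Taking $k = 0$, every clause 2(a)--(f) of Definition~\ref{DefinitionofZ} is vacuous, so the roots are $Z$-related with empty parameter lists. By Lemma~\ref{Zanasimulation}, $Z$ is a CD-asimulation between the $L(P,Q,R)$-reducts of $\mathcal{M}_1$ and $\mathcal{M}_2$, so the preservation theorem for CD-asimulations from \cite{MOU2012} applies: every $L(P,Q,R)$-formula forced at $\mathbf{v}$ in $\mathcal{M}_1$ is forced at $\mathbf{u}$ in $\mathcal{M}_2$. This is precisely the direction of transfer I need, and it is exactly why $s$ had to be kept out of the reduct language.

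Finally I would close by contradiction. Suppose $T$ defined $s$ explicitly by some sentence $\Theta \in L(P,Q,R)$, so $T \models (\Theta \leftrightarrow s)$. Applying this in $\mathcal{M}_1$ at $\mathbf{v}$, where $s$ holds, gives $\mathbf{v} \Vdash_1 \Theta$; preservation then yields $\mathbf{u} \Vdash_2 \Theta$; and applying the definition once more in $\mathcal{M}_2$ at $\mathbf{u}$ forces $\mathbf{u} \Vdash_2 s$, contradicting $\mathbf{u} \not\Vdash_2 s$. Hence no such $\Theta$ exists, so $T$ defines $s$ implicitly but not explicitly, and CD lacks Beth's property. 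The genuine difficulty — checking that the symmetric, companion-respecting relation $Z$ really satisfies the forth and back domain clauses — has already been discharged in Lemma~\ref{Zanasimulation}; the only remaining work is the bookkeeping that the roots are $Z$-related under the empty assignment and that $s$ separates them, both of which are immediate.
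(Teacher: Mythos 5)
Your proposal is correct and follows essentially the same route as the paper: use Lemma~\ref{Limpl} for implicit definability, note $\langle\mathbf{v},\Lambda\rangle\, Z\, \langle\mathbf{u},\Lambda\rangle$ holds vacuously for the empty parameter sequence, and then use Lemma~\ref{Zanasimulation} together with the preservation property of CD-asimulations to transfer a putative explicit definition $\Theta$ from $\mathbf{v}$ to $\mathbf{u}$, contradicting Lemma~\ref{Lsat} since $s$ fails at $\mathbf{u}$. The only difference is presentational: you spell out why $\mathbf{u}\notin U$ (hence $\mathbf{u}\not\Vdash_2 s$) and cite the preservation theorem explicitly, both of which the paper leaves implicit.
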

\begin{proof}
Consider Theory $T$. According to Lemma \ref{Limpl}, it implicitly
defines $s$. If the logic in question enjoys Beth definability
property, then there is a sentence $\Theta \in L(P,Q,R)$ such that
$T \models s \leftrightarrow \Theta$. But then, by Lemma
\ref{Lsat}, we must have $\mathbf{v} \Vdash_1 \Theta$. It follows
from Definition \ref{DefinitionofZ} that $\langle\mathbf{v},
\Lambda\rangle Z \langle\mathbf{u}, \Lambda\rangle$, where
$\Lambda$ is the empty sequence of objects. Therefore, by Lemma
\ref{Zanasimulation}, we must have  $\mathbf{u} \Vdash_2 \Theta$.
But, given that $s$ is not true at $\mathbf{u}$, this gives us a
contradiction with Lemma \ref{Lsat}.
\end{proof}

\bibliographystyle{habbrv}

\bibliography{logic}

}
\end{document}